\documentclass{amsart}

\usepackage{amsfonts,amssymb,amsmath,amsthm}

\usepackage{geometry}

\title{A note on purely infinite extensions}

\author{P. W. Ng}

\dedicatory{Dedicated to Professor Huaxin Lin on the occasion of his 70th birthday. }

\address{Department of Mathematics\\  University of Louisiana at Lafayette\\
217 Maxim Doucet Hall\\   1401 Johnston St.\\
Lafayette, Louisiana\\
70503\\  USA}

\email{png@louisiana.edu}

\author{Cangyuan Wang}

\address{School of Mathematical Sciences\\
	Ocean University of China\\
	238 Songling Road, Laoshan District\\
	Qingdao, Shandong\\
266100\\ 
P. R. China}

\email{cyw@ouc.edu.cn}

\newtheorem{thm}{Theorem}[section]  
\newtheorem{prop}[thm]{Proposition}

\newtheorem{lem}[thm]{Lemma}
\newtheorem{df}[thm]{Definition}

\theoremstyle{definition}
\newtheorem{para}[thm]{}

\newcommand{\A}{\mathcal{A}}
\newcommand{\B}{\mathcal{B}}
\newcommand{\C}{\mathcal{C}}
\newcommand{\D}{\mathcal{D}}
\newcommand{\E}{\mathcal{E}}

\newcommand{\K}{\mathcal{K}}
\newcommand{\J}{\mathcal{J}}

\newcommand{\oo}{\mathcal{O}}
\newcommand{\I}{\mathcal{I}}

\newcommand{\Mul}{\mathcal{M}}

\newcommand{\M}{\mathbb{M}}

\newcommand{\ee}{\mathbf{Ext}}    

\numberwithin{equation}{thm}

\begin{document}

\maketitle

\begin{abstract}
We study a strong pure infiniteness property for extensions, and give a classification result 
for extensions which contains many (and possibly all)
previous cases, under standard and mild assumptions. 

We also prove a Voiculescu-type theorem decomposition theorem for 
certain C*-subalgebras of a corona algebra, for which the 
inclusion map has this strong
pure infiniteness property.
\end{abstract}

\section{Introduction}

In their ground-breaking work, Brown, Douglas and Fillmore 
(BDF) classified all
essentially normal operators using K theory invariants. In the process,
they classified all essential C*-algebra extensions of the form 
$0 \rightarrow \K \rightarrow \E \rightarrow C(X) \rightarrow 0$, where
$\K$ is the C*-algebra of compact operators on a separable infinite dimensional
Hilbert space $l_2$, and $X \subset \mathbb{C}$ is a compact subset (see 
\cite{BDF1}). 
This phenomenal result kickstarted many important
developments in operator theory 
and noncommutative topology,
with the introduction
of many powerful tools like Kasparov's KK theory. 
(Some examples of the history and
 broader context can be found in
\cite{BDF1}, \cite{BDF2}, \cite{DouglasWeylLectures}, \cite{VoiculescuWvN}, \cite{ArvesonExt}, \cite{KasparovAbsorbing}, \cite{KasparovKK},   
\cite{RosenbergSchochet},
\cite{LinStableAUE}, \cite{HadwinMaShen}, etc.)

A central result in the BDF--Kasparov extension theory is the following theorem of Kasparov:  If $\A$ is a separable nuclear C*-algebra 
and $\B$ is a $\sigma$-unital C*-algebra, 
then there is a group isomorphism $KK^1(\A, \B) \cong Ext_a(\A, \B)$, where $Ext_a(\A, \B)$ is the group of unitary equivalence classes of absorbing extensions
$\A \rightarrow \C(\B \otimes \K)$ (\cite{KasparovKK} Section 7, Theorem 1).   This theory classifies absorbing extensions modulo the computability
of Kasparov's $KK^1$ (e.g., via the UCT \cite{RosenbergSchochet}), but it is incomplete in that the class of absorbing extensions can be a thin class,
and thus, many essential extensions need not be absorbing.  (Among other
things, every absorbing extension $\A \rightarrow \C(\B \otimes \K)$ is \emph{full}, i.e., brings nonzero elements of $\A$ to full
elements of $\C(\B \otimes \K)$.)  This led to the works of Lin and others in ``nonstable" extension theory, which focus on corona algebras 
with ``nice" properties (e.g., pure infiniteness, real rank zero), generalizing that the original BDF corona algebra is 
the Calkin algebra $\mathbb{B}(l_2)/\K$
which is simple purely infinite -- an important ingredient in the original BDF theory (e.g., see \cite{LinContScale}, \cite{LinVictoria}, \cite{LinExtII},
\cite{LinExt1997}, \cite{LinExtIII},  \cite{LinNgZ}, \cite{KNZPICorona},   \cite{NgPICorExt},  
\cite{NgExtFunctor2}, \cite{NgPIRR0}, \cite{LinNgSmallIdeals}, \cite{GabeLinNg}, \cite{NgWangExt1}).

In this short note, we move the purely infinite property from the corona algebra to the extension itself.  We obtain a (BDF-type) classification of such
purely infinite extensions, 
where the extensions need not be full, and the canonical ideal need not be stable (Theorem \ref{thm:MainThEnd}) -- and this  
classification seems to cover many (if not all) previous classified cases
(where the essential extension has large complement, and is of a 
$\sigma$-unital C*-algebra by a separable nuclear C*-algebra; and
the classification is up to (strong) unitary equivalence). 
The purely infinite property that we use is Gabe's interesting notion of
strong $O_{\infty}$-stability (\cite{GabeO2}; \cite{GabeOinfty} (1.1), Definition 4.1 and Remark 4.2), and we replace Kasparov's KK with Kirchberg's interesting ideal related KK theory
(\cite{KirchbergIdealKK}).  We also prove a  Voiculescu-type decomposition
theorem for completely positive maps from a strongly $O_{\infty}$-stable separable nuclear C*-subalgebra
of a corona algebra.

\section{A Voiculescu type decomposition theorem}

\begin{para} \label{FirstNotation} 
Let us begin by briefly recalling some notation. 
For a nonunital C*-algebra $\B$, we let $\Mul(\B)$ and $\C(\B) =_{df}
\Mul(\B)/\B$ denote the multiplier algebra and corona algebra of $\B$
respectively. Recall that for any C*-algebra $\A$, there is, up
to strong isomorphism, a
one-to-one correspondence between extensions of
$\B$ by $\A$ (i.e., extensions
$0 \rightarrow \B \rightarrow \E \rightarrow \A \rightarrow 0$) and
*-homomorphisms $\phi : A \rightarrow \C(\B)$ (the
\emph{Busby invariant} of the extension); e.g., see \cite{BlackadarKTh} 15.2, 15.3, 15.4.
Since we are only interested in properties of an extension that are invariant
under strong isomorphism,
following standard convention,
we identify a *-homomorphism $\A \rightarrow \C(\B)$
with the corresponding extension. 
Recall that an extension is essential
if and only if the corresponding Busby invariant is
injective.
Also, an extension $\A \rightarrow \C(\B)$ is said to be \emph{trivial} if it can be lifted to a *-homomorphism
$\A \rightarrow \Mul(\B)$.
Finally, we say that an extension  $\A \rightarrow \C(\B)$ has \emph{large complement}
if there is a projection $p \in \phi(\A)^{\perp}$ such that $p \sim 1_{\C(\B)}$. 
(In fact, in the definition of large complement, we can replace $\C(\B)$ with an arbitrary
unital C*-algebra $\C$.)
Some references for multiplier algebras, corona algebras and extension
theory, where further information can be found, are \cite{BlackadarKTh}, \cite{JensenThomsen}
and \cite{WeggeOlsen}.

For a C*-algebra $\D$ and for $a, b \in \D_+$, $a \preceq b$ (``$a$ is Cuntz subequivalent
to $b$")
means that there exists a sequence $\{ x_n \}$ in $\D$ such that
$x_n b x_n^* \rightarrow a$.  (This subequivalence generalizes Murray--von
Neumann subequivalence for projections.) 
For a subset $S
\subseteq \D$, we let $Ideal_{\D}(S)$ denote the ideal of $\D$
which is generated by $S$.  We often write $Ideal(S)$ in
place of $Ideal_{\D}(S)$.
Also, for any ideal $\J \subset \D$,
we let $\pi_{\J} : \D \rightarrow \D/\J$ denote the usual quotient map.
Again, we sometimes abbreviate by writing
$\pi : \D \rightarrow \D/\J$ instead of $\pi_{\J}$.

Finally, for any two elements $x, y$ in a C*-algebra and for any $\epsilon > 0$,
we let $x \approx_{\epsilon} y$ be a shorthand for $\| x - y \| < \epsilon$.
\end{para}

We introduce some pure infiniteness properties for extensions.

\begin{df} \label{df:PIExt}
Let $\A$ and $\B$ be C*-algebras with $\B$ nonunital,
and let $\phi : \A \rightarrow \C(\B)$ be an essential extension.
\begin{enumerate}
\item $\phi$ is \emph{purely infinite} if $\phi(a) \oplus \phi(a) \preceq \phi(a)$
in $\M_2(\C(\B))$, for all $a \in \A_+ - \{ 0 \}$.  Here, $\oplus$ means 
putting into the diagonal of $\M_2(\C(\B))$.
\item (\cite{GabeO2}; \cite{GabeOinfty}  (1.1), Definition 4.1 and Remark 4.2) $\phi$ is \emph{strongly $O_{\infty}$-stable} if
for $j = 1, 2$, there is a (norm-) continuous and (norm-) bounded path 
$\{ s^{(j)}_t \}_{t \in [0, \infty)}$      
in $\C(\B)$ such that for all $1 \leq j, k \leq 2$,  and for all $a \in \A$,
$$\lim_{t \rightarrow \infty} \| s^{(j)}_t \phi(a) - \phi(a) s^{(j)}_t \| = 0
\makebox{  and }  \lim_{t \rightarrow \infty} \| ((s^{(j)}_t)^* s^{(k)}_t - 
\delta_{j,k})\phi(a) \| = 0,$$
where $\delta_{j,k} = \begin{cases} 1 & j = k \\ 0 & j \neq k 
\end{cases}$.   
\end{enumerate}
\end{df}
\begin{para}  \label{para:PIDef}
If $\phi$ is strongly $O_{\infty}$-stable 
then $\phi$ 
is purely infinite:   Say that $\phi$ is strongly $O_{\infty}$-stable.    
For every integer $n \geq 1$,
let $v_n\in M_{2,1}(\C(\B)$) be defined as $v_n:=\begin{pmatrix}
(s^{(1)}_n)^*   \\
(s^{(2)}_n)^*
\end{pmatrix}$, where $\{ s^{(j)}_t \}_{t \in [0, \infty)}$ ($j = 1,2$)
is given
by Definition \ref{df:PIExt} (2).
Then from
 Definition \ref{df:PIExt} (2),  one can check that for all $a\in \A_+$, $\lim_{n \rightarrow \infty}\|v_n\phi(a)v^*_n-\phi(a)\oplus\phi(a)\|=0$. Hence, the Cuntz relation $\phi(a)\oplus\phi(a)\preceq\phi(a)$ holds for all 
$a \in \A_+$, and thus, $\phi$ is purely infinite.

Also, it is a short exercise to show than an extension $\phi : \A \rightarrow \C(\B)$ is full and purely infinite if and only if for any $a \in \A_+ - \{ 0 \}$,
there exists an $x \in \C(\B)$ for which $x \phi(a)x^* = 1_{\C(\B)}$.  
\end{para}

\begin{para}
Definitions \ref{df:PIExt} (1) and (2) roughly correspond to the pure infiniteness and $O_{\infty}$-stability
properties for C*-algebras, introduced in  the interesting work of Kirchberg and Rordam (e.g., \cite{KirchbergRordamPI},
\cite{KirchbergIdealKK}).  As far as we know, for an arbitrary separable nuclear C*-algebra, it is presently still an open problem 
whether weak pure infiniteness, pure infiniteness and strong pure infiniteness are equivalent properties (see \cite{KirchbergRordamPI}).
They are equivalent in the simple case.  Definition \ref{df:PIExt} (1) is also motivated by older works in extension theory;  for
example, when $\B$ is separable and
 stable, if one adds fullness of $\phi$ to the condition in Definition \ref{df:PIExt} (1), then the resulting
 condition is equivalent to the 
Elliott--Kucerovsky purely large condition (\cite{ElliottKucerovsky}). 
 \end{para}

\begin{para} \label{para:ForcedUnitization}
Let $\A$ be a C*-algebra. Recall that the \emph{forced unitization} of $\A$
is $$\A^+ =_{df} \begin{cases} \A^{\sim} & \makebox{ if $\A$ is nonunital,}\\
\A \oplus \mathbb{C} & \makebox{ if $\A$ is unital.}
\end{cases}$$  
Here, $\A^{\sim}$ is the usual minimal unitization of $\A$, when
$\A$ is nonunital.

Suppose that $\C$ is a unital C*-algebra and $\phi : \A \rightarrow \C$ is
a *-homomorphism. Then $\phi^+ : \A^+ \rightarrow \C$ is the *-homomorphism
given by $\phi^+ |_{\A} = \phi$ and $\phi^+(1) = 1_{\C}$.

Let $\B$ be a nonunital C*-algebra, and $\phi : \A \rightarrow \C(\B)$ be an extension with large
complement.  So $\phi^+$ is an extension. 
Then, if $\phi$ is essential then 
$\phi^+$ is essential.  And if $\phi$ is full and purely infinite, then
$\phi^+$ is full and purely infinite.    These statements are straightforward to prove, but let
us give the short proof for the latter statement:  Suppose that $\phi$ is full and purely
infinite.   Let $a \in \A^+$ be a nonzero positive element.  If 
	$a \in \A$,  then, since $\phi^+ |_{\A} = \phi$ and $\phi$ is full and
	purely infinite, we can find $x \in \C(\B)$ with $x \phi(a) x^* = 1_{\C(\B)}$  (see last paragraph of 
\ref{para:PIDef}); and so, $x \phi^+(a) x^*  = 1_{\C(\B)}$.
	Now suppose that $a = \alpha 1 + a_0$ where $\alpha > 0$ and $a_0 \in \A$.
	Then $\phi(a) = \alpha 1_{\C(\B)} + \phi(a_0)$. Since $\phi$ has   
	large complement, we can find a projection $p \in \phi(\A)^{\perp}$ such that
	$p \sim 1_{\C(\B)}$ (as a consequence, $p \perp \phi(a_0)$). Hence, let 
	$v \in \C(\B)$ be a partial isometry where $v^* v = p$ and $v v^* 
	= 1_{\C(\B)}$.  Hence, if we
	take $x =_{df} \frac{1}{\sqrt{\alpha}} v$, then
	$x \phi^+(a) x^* = 1_{\C(\B)}$.  Since $a \in (\A^+)_+ - \{ 0 \}$ was arbitrary,
	$\phi^+$ is full and purely infinite   (see last paragraph of \ref{para:PIDef}).  
\end{para}

Many examples of extensions (including absorbing extensions), which have been classified, are strongly
$O_{\infty}$-stable.

\begin{prop} \label{prop:Examples1}
Let $\A$ be a separable nuclear C*-algebra, $\B$ be a nonunital 
$\sigma$-unital C*-algebra, and let $\phi : \A \rightarrow \C(\B)$
be an essential extension with large complement.
Suppose that one of the following statements are true:
\begin{enumerate}
\item  $\B$ is stable and $\phi$ is absorbing.
\item  $\B$ is stable, $X$ is a topological space, $\B$ and $\A$ are $X$-C*-algebras, 
$\phi$ is weakly $X$-equivariant and $X$-absorbing -- i.e., $\phi$ absorbs all
weakly $X$-equivariant trivial extensions (\cite{GabeRuiz} Definitions 2.16 and 4.1, Theorem 6.11; \cite{GabeOinfty} Remark 11.2, Definition 11.13;
\cite{NgWangExt1} 4.14 last paragraph).
\item $\B$ is simple and has continuous scale.
\item $\C(\B)$ is properly infinite, and $\phi$ is full and purely infinite.
\item $\B$ is simple, pure, and has quasicontinuous scale. 
\item  $\B$ is $O_{\infty}$-stable.  
\end{enumerate}
Then  $\phi$ is strongly $O_{\infty}$-stable.
\end{prop}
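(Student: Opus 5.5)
Most of the cases reduce to case (4). The plan is to prove (4) directly, reduce (1), (3), (5) to it, and treat (2) and (6) separately.

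\emph{Step 1: case (4).} Assume $\C(\B)$ is properly infinite and $\phi$ is full and purely infinite. I would construct the paths $s^{(j)}_t$ as in the standard Kirchberg--R{\o}rdam/Gabe argument showing that full, purely infinite, nuclear maps are $O_\infty$-absorbing.
\begin{itemize}
\item By the last paragraph of \ref{para:PIDef} and \ref{para:ForcedUnitization}, $\phi^+$ is unital, full and purely infinite. So for every nonzero positive $a$ in $\A^+$ there is $x$ with $x\phi^+(a)x^*=1$.
\item Since $\A$ is separable and nuclear, I would invoke the Kirchberg--Gabe characterization (\cite{GabeOinfty}): a nuclear map which is ``$O_\infty$-full'' in this sense (i.e.\ $\phi^+$ absorbs the unital trivial map to $O_\infty\subseteq 1\cdot\C(\B)\cdot 1$, which exists by proper infiniteness) is strongly $O_\infty$-stable.
\item Concretely: choose isometries $S_1,S_2$ in $\C(\B)$ with orthogonal ranges, giving a unital copy of $O_2$ and hence of $O_\infty$. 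By a Voiculescu/Kasparov-type absorption argument for the purely large map $\phi^+$, $\phi^+$ is approximately unitarily equivalent to $\phi^+\otimes 1_{O_\infty}$. Conjugating the isometries $1\otimes S_j$ by these unitaries gives asymptotically central isometries in the commutant modulo annihilator.
\item Continuity in $t$ comes from connecting successive choices by paths in the unitary group of $O_2$-type corners. This is possible because $\C(\B)$ being properly infinite gives $K_1$-trivial unitaries in the relevant $O_2$ subalgebras.
\end{itemize}

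\emph{Step 2: cases (1), (3), (5) reduce to (4).}
\begin{itemize}
\item In (1), absorbing extensions are full. Since $\B$ is stable, $\C(\B)$ is properly infinite, and absorption of $\phi\oplus\phi\oplus\cdots$ gives $\phi\oplus\phi\sim\phi$, so $\phi$ is purely infinite.
\item In (3), by Lin's results $\C(\B)$ is simple purely infinite, so every essential extension is full and purely infinite (nonzero elements are properly infinite full).
\item In (5), with quasicontinuous scale, $\C(\B)$ has finitely many ideals and is purely infinite (Kucerovsky--Ng--Perera results). Here the remaining issue is fullness: I would pass to the ideal generated by $\phi(\A)$ (a corona-type algebra of a hereditary subalgebra, still properly infinite), and apply (4) there.
\end{itemize}

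\emph{Step 3: cases (2) and (6).}
\begin{itemize}
\item For (2), Gabe's theory identifies $X$-absorbing with being nuclearly $O_\infty$-absorbing relative to the action, which gives strong $O_\infty$-stability directly by \cite{GabeOinfty} Definition 11.13.
\item For (6), I would use $\B\cong\B\otimes O_\infty$ together with an asymptotically central path of isometries $s^{(j)}_t$ in $\Mul(\B)$, coming from the strongly self-absorbing property of $O_\infty$ (Toms--Winter, Kirchberg). Their images in $\C(\B)$ are asymptotically central in all of $\C(\B)$ on the image of any separable subalgebra, so the conditions of Definition \ref{df:PIExt} (2) hold for any $\phi$.
\end{itemize}

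The main obstacle is Step 1, specifically obtaining \emph{continuous} paths rather than sequences; there I would rely on Gabe's sequence-to-path reparametrization (\cite{GabeOinfty} Remark 4.2). A secondary risk is fullness in case (5).
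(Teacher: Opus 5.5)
Your reduction of (3) to (4) and the unitization step in (4) agree with the paper. For (4) the paper just cites Ng--Wang, Prop.~3.8, applied to the unital, full, purely infinite map $\phi^+$. The ``concrete'' route you sketch for (4) cannot replace that citation. $\B$ is not assumed stable there, so no Voiculescu/Kasparov absorption theorem is available, and $\phi^+\otimes 1_{O_\infty}$ is not a map into $\C(\B)$.

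The real gaps are in (1), (2), (5) and (6).

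In (1), the claim that absorption gives $\phi\oplus\phi\sim\phi$ is false. $\phi\oplus\phi\oplus\cdots$ is not a trivial extension, so absorption says nothing about it. Moreover, absorbing classes form a group ($\cong KK^1(\A,\B)$), so $\phi\oplus\phi\sim\phi$ would force $[\phi]=0$. For a Toeplitz-type extension of $C(\mathbb{T})$ by $\K$, the Fredholm index distinguishes $\phi\oplus\phi$ from $\phi$. Fullness and pure infiniteness of absorbing extensions need Kucerovsky's theorem (absorbing implies purely large), which the paper cites, or the domination argument described next.

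In (2) you give no argument: a definition cannot produce the paths. The missing idea is to absorb a \emph{specific} trivial extension that is visibly strongly $O_\infty$-stable and dominates $\phi$.
\begin{enumerate}
\item Dilate a weakly $X$-equivariant c.p.\ lift $\phi_0$ of $\phi$ to a weakly $X$-equivariant $*$-homomorphism $\theta$, using Gabe--Ruiz's Kasparov--Stinespring theorem for operator convex cones.
\item Use stability of $\B$ to form the infinite repeat $\Theta$ of $\theta$ in $\Mul(\B)$. Its commutant contains a unital copy of $\mathbb{B}(l_2)$, hence of $O_\infty$.
\item Since $\Theta$ approximately dominates $\phi_0$, Gabe's Proposition 4.10 makes $(\pi\circ\Theta)\oplus\phi$ strongly $O_\infty$-stable.
\item $X$-absorption gives $\phi\sim(\pi\circ\Theta)\oplus\phi$.
\end{enumerate}

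In (5), the reduction to (4) fails exactly where you flagged a risk. $\phi$ need not be full, and passing to the ideal generated by $\phi(\A)$ does not repair this. If $\phi(a)$ and $\phi(b)$ generate distinct ideals (e.g.\ the two minimal projections of $\A=\mathbb{C}^2$), at least one of them is non-full in any ideal containing both. So there is in general no single ideal in which $\phi$ is full. The paper avoids fullness entirely by applying Ng--Wang, Prop.~4.9, to the essential unital extension $\phi^+$.

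In (6), $\B\cong\B\otimes O_\infty$ and strong self-absorption of $O_\infty$ only give isometries that are asymptotically central relative to separable subsets of $\B$. The lifts of $\phi(\A)$ are multipliers. Producing elements of $\Mul(\B)$ that asymptotically commute with them modulo $\B$ is exactly the nontrivial content of Farah--Szab\'o's Theorem B: $\Mul(\B)$ is separably $O_\infty$-stable when $\B$ is $\sigma$-unital and $O_\infty$-stable. The paper uses that theorem to see that $\C(\B)$ is strongly purely infinite (Kirchberg--R{\o}rdam). It then applies Gabe's Theorem 9.7 to conclude that the nuclear map $\phi$ is strongly $O_\infty$-stable, which also supplies the continuous paths.
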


\begin{proof}
Proof for Case (4):   If $\phi$ is full and purely infinite, then $\phi^+$ is full and purely infinite
(by \ref{para:ForcedUnitization}) and the result follows immediately from     
\cite{NgWangExt1} Proposition 3.8.  Cases (1) and (3) follow from case (4).
(See also \cite{NgWangVNAUniqueness} Lemma 3.2;  \cite{KucerovskyFredholm}
Lemma 1.3 and Theorem 1.4; the proof of the relevant direction does not need separability of 
$\B$, which can nonetheless be removed (from both directions) by standard reduction arguments; see also \cite{LinContScale}, \cite{LinSimpleCorona}.)

Case (5) follows from that if $\phi$ is essential then $\phi^+$ is essential
(see \ref{para:ForcedUnitization}), and from \cite{NgWangExt1} Proposition 4.9.  

Proof for Case (6):  By \cite{FarahSzabo} Theorem B, $\Mul(\B)$ is
separably $O_{\infty}$-stable, and hence, $\Mul(\B)$ and $\C(\B)$ are strongly
purely infinite (see \cite{KirchbergRordamPI} Definition 5.1 and Proposition 5.11). The result follows immediately from 
\cite{GabeOinfty} Theorem 9.7.  

Proof for Case (2):   We freely use the terminology of actions of topological spaces on C*-algebras without
explanation (e.g., see \cite{GabeOinfty}, \cite{NgWangExt1}, \cite{GabeRuiz}).   \emph{The reader might wish to skip this
proof on a first reading, and return to it only after completing the rest of the paper.}
Since $\phi$ is weakly $X$-equivariant, let $\phi_0 : \A \rightarrow \Mul(\B)$ be a completely positive linear 
map with $\pi \circ \phi_0 = \phi$ such that $\phi_0$ is weakly $X$-equivariant (\cite{GabeRuiz} Definitions 2.16 and 4.1; \cite{GabeOinfty}
Remark 11.2).  
By the Kasparov--Stinespring Theorem with operator convex cones 
 (\cite{GabeRuiz} Corollary 2.26),  let $\theta : \A \rightarrow \Mul(\B)$ be a weakly $X$-equivariant
*-homomorphism (i.e., a trivial extension) and $y \in \Mul(\B)$ such that $y \theta(\cdot) y^* = \phi_0(\cdot)$.  
(Note that we may need to initially 
replace $\phi_0$ with $\phi_0/C$ for some $C > 0$ to ensure that 
we have a contractive map to apply \cite{GabeRuiz} Cor. 2.26.)  We construct the infinite repeat (or generalized Lin extension) $\Theta$ as follows:
Since $\B$ is stable, let $\{ P_n \}$ be a sequence of pairwise orthogonal projections in $\Mul(\B)$ for which 
$P_n \sim 1_{\Mul(\B)}$ for all $n$, and $\sum_{n=1}^{\infty} P_n = 1_{\Mul(\B)}$, where the sum converges strictly in $\Mul(\B)$.
 Let $S \in \Mul(\B)$ be an isometry with $SS^* \leq P_1$ and $P_1 - SS^* \sim 1_{\Mul(\B)}$.  
Let $\theta_{1,1} : \A \rightarrow P_1\Mul(\B)P_1$ be the *-homomorphism, with large complement, given by 
$\theta_{1,1} =_{df} S \theta (\cdot) S^*$.   For all $n \geq 1$, let $e_{1, n} \in \Mul(\B)$ be a partial isometry where
$e_{1,1} = P_1$, $e_{1, n} e_{1, n}^* = e_{1,1}$ and $e_{1,n}^* e_{1,n} = P_n$.  For all $m, n \geq 1$, define
$e_{m,n} =_{df} e_{1,m}^* e_{1, n}$.   We define the generalized Lin extension $\Theta : \A \rightarrow \Mul(\B)$ by
$\Theta(\cdot) =_{df} \sum_{k=1}^{\infty} e_{k,1} \theta_{1,1}(\cdot) e_{1,k}$, where the series converges in the pointwise-strict topology.
Hence, $\Theta$ is a weakly $X$-equivariant trivial extension with large complement. Also, for all $m, n \geq 1$, for all $a \in \A$, 
$e_{m,n} \Theta(a) = e_{m,n} \sum_k e_{k,1} \theta_{1,1}(a) e_{1,k} = e_{m,n} e_{n,1} \theta_{1,1}(a) e_{1,n} 
= e_{m, 1} \theta_{1,1}(a) e_{1,n} = 
e_{m,1} \theta_{1,1}(a) e_{1,m} e_{m,n} = \left(\sum_k e_{k,1} \theta_{1,1}(a) e_{1,k}\right) e_{m,n} = \Theta(a) e_{m,n}$.
Hence, $\Theta(\A)'$ contains a unital copy of $\mathbb{B}(l_2)$, and hence contains a unital copy of $O_{\infty}$.  Hence,
$\Theta$ is strongly $O_{\infty}$-stable.  Hence, since $\Theta$ approximately dominates $\phi_0$ (see \cite{GabeOinfty} Definition 3.1), 
$\pi(\Theta) \oplus \phi$ is strongly $O_{\infty}$-stable (\cite{GabeOinfty} Proposition 4.10), where we take $\oplus$ to be the classical BDF sum (see,
e.g., \cite{GabeOinfty} Remark 5.1 where it is called the ``Cuntz sum";  see also \cite{ElliottKucerovsky} 4, where they take the quotient.)
Since $\phi$ absorbs every weakly $X$-equivariant trivial extension, $\pi(\Theta) \oplus \phi$ is unitarily equivalent to $\phi$ (with unitary 
from $\Mul(\B)$).  Hence, $\phi$ is strongly $O_{\infty}$-stable.
\end{proof}

\emph{The reader  might wish to 
skip \ref{para:IdealAbsorption}, \ref{para:Examples2} and \ref{para:Examples3} 
on a first reading, and to return
to them only after completing the rest of the paper.} 

\begin{para} \label{para:IdealAbsorption}
While the theory in Case (2) of Proposition \ref{prop:Examples1} is very interesting (e.g., see \cite{GabeOinfty}), for the purpose of classifying
extensions, it is not enough, since the relevant ideals are all strictly closed:  
Say that $\phi$ is a trivial essential extension that satisfies
the conditions of Case (2).  So let $\phi_0 : \A \rightarrow \Mul(\B)$ be a  weakly 
$X$-equivariant and $X$-absorbing *-homomorphism such that
$\pi \circ \phi_0 = \phi$.    Then by \cite{GabeRuiz} Lemma 6.6, 
for all $a \in \A$, $\overline{\Mul(\B) \phi(a) \Mul(\B)} = \overline{\Mul(\B) \phi(a) \Mul(\B)}^{strict} 
= \overline{\B \phi(a) \B}^{strict}$,  where $\overline{(\cdot)}$ is the norm-closure and $\overline{(\cdot)}^{strict}$ is the 
closure in the strict topology.  For example, if $\C$ is a unital simple AF algebra with infinitely many extreme tracial 
states, then $\Mul(\C \otimes \K)$ has infinitely many ideals, but none of them are strictly closed except for
$\{ 0 \}$ and $\Mul(\C \otimes \K)$. 
\end{para}

\begin{para} \label{para:Examples2}  
One can construct 
many examples of a strongly $O_{\infty}$-stable
essential extensions that do not fall within the cases mentioned in
Proposition \ref{prop:Examples1}.  Here are two quick examples: 
Let $\A$ be any unital separable nuclear C*-algebra which can be unitally
embedded into $O_{\infty}$ (e.g., $\A = C(\mathbb{T}^2)$, where 
$\mathbb{T}^2$ is the $2$-torus).  Let $\C$ be any unital simple AT-algebra
with $K_1(\C) \neq 0$ and having infinitely many extreme tracial states. 
Let $\B =_{df} \C \otimes \K$.  By \cite{KNZPICorona}, $\C(\B)$ is not 
purely infinite.
Let $\phi_0 : \A \rightarrow \Mul(\B)$ be any nonfull *-monomorphism
for which $\phi_0(\A) \cap \B = \{ 0 \}$.  Let $\phi =_{df} 
\pi \circ \iota \circ \phi_0 : \A \rightarrow \C(\B \otimes \K)$  
where $\pi : \Mul(\B \otimes \K) \rightarrow \C(\B \otimes \K)$ is the
usual quotient map, and $\iota : \Mul(\B) \rightarrow \Mul(\B) \otimes 1_{\Mul(\K)} \subset \Mul(\B) \otimes \Mul(\K) \subset \Mul(\B \otimes \K)$
 is the obvious *-embedding.
  Then $\phi$ is a (trivial) essential
extension which is strongly
$O_{\infty}$-stable -- in particular, if $S, T \in \Mul(\K) = \mathbb{B}(l_2)$
are isometries with $SS^* + TT^* = 1_{\Mul(\K)}$, 
then $\pi(1_{\Mul(\B)} \otimes S)$ and $\pi(1_{\Mul(\B)} \otimes T)$ witness
that each element of 
$\phi(\A)$ commutes with a copy of $O_2$ 
which is a unital C*-subalgebra
of $\C(\B \otimes \K)$. Since we have a unital *-embedding 
$O_{\infty} \subseteq O_2$,  $\phi$
is strongly $O_{\infty}$-stable. 
Since $\C(\B \otimes \K)$ is not purely infinite and since $\phi$ is nonfull,
$\phi$ does not fall within the cases in Proposition \ref{prop:Examples1}.    
 
Even though $\phi$ as above is a trivial extension, this is not a serious issue, since a trivial extension
need not be the zero element of the relevant extension group, outside of the
BDF--Kasparov theory.  Nonetheless, here is a quick nontrivial example:
Let $\A$, $\C$ and $\B$ be as above.
By \cite{LinContScale} Remark 2.9,
let $\J \subseteq \Mul(\B)$ be the minimal ideal of 
$\Mul(\B)$ which properly contains $\B$. 
By \cite{LinNgZ} Proposition 4.2 and \cite{ThielRanks} Corollary 4.6 (see also \cite{KNZPICorona} Theorems 5.7 and 5.8) and \cite{KNZMin}
Theorem 5.6, we can find a projection $P \in \J - \B$.
Then $\pi(P) \C(\B) \pi(P) = \C(P \B P)$ is
simple, and hence, simple purely infinite (see \cite{LinSimpleCorona}
Cor. 3.3); and thus, $P \B P$ has continuous scale.  
By \cite{LinNgSmallIdeals} Theorem 7.4, and since  
$K_1(\B) \neq 0$, we can find a nonzero projection $q \in \C(P \B  P)$
such that $q$ cannot be lifted to a projection in $\Mul(\B )$.
Now, $q \pi(\J)q$ contains a unital copy of $O_{\infty}$. 
Since $\A$ can be unitally embedded into $O_{\infty}$,
we certainly have a nontrivial essential extension 
$\psi : \A \rightarrow q \C(\B) q \subseteq \C(P\B P)
\subseteq \C(\B )$. 
Note that since $\C(P \B P)$ is simple purely infinite, $\pi(P) - q$ is properly
infinite.  Also, $1_{\Mul(\B)} - P \sim 1_{\Mul(\B)}$ (which is properly infinite) by
\cite{LinNgZ} Proposition 4.2 (see also \cite{KNZMin} Theorem 5.6).
 From the above, and 
by Proposition \ref{prop:Examples1} (2),
$\psi$ is strongly $O_{\infty}$-stable.    However, since $\C(\B)$ is not purely 
infinite and since $\psi$ is nonfull, $\psi$ is not contained in the cases in Proposition 
\ref{prop:Examples1} (and $\psi$ is not a trivial extension).
\end{para}

\begin{para}  \label{para:Examples3} Here is a quick example of a nonstrongly $O_{\infty}$-stable essential extension:
Let $\C$ be a simple unital AF-algebra with infinitely many extreme tracial states, and let 
$\B =_{df} \C \otimes \K$.  By \cite{KNZPICorona}, $\C(\B)$ is not purely infinite. By
\cite{LinWvN2}, $\C(\B)$ has real rank zero.  Hence, we can find a nonzero projection
$r \in \C(\B)$ for which $r \oplus r \not\preceq r$.   
Let $\phi : \mathbb{C} \rightarrow \C(\B)$ be the essential extension given by $\phi(1) =_{df} r$.
Then $\phi$ is an essential extension which is not strongly $O_{\infty}$-stable.
\end{para}

We now move towards a Voiculescu-type decomposition (Lemma \ref{lem:Approx1StepInner}, 
Lemma \ref{lem:Mar720261AM} and Theorem \ref{thm:NgWangWvN1}).   Part of the main  point
is that the strong $O_{\infty}$-stability condition simplifies the proofs, making them go very
smoothly.

\begin{lem} \label{lem:Approx1StepInner}
Let $\B$ be a nonunital C*-algebra, and 
let $\A \subset \C(\B)$ be a separable nuclear C*-subalgebra, such that
the inclusion map $\A \hookrightarrow \C(\B)$ is strongly $O_{\infty}$-stable.

Suppose that $\psi: \A \rightarrow \C(\B)$ is a completely positive contractive
linear map for which $\psi(a) \in Ideal_{\C(\B)}(a)$ for all $a \in \A$.

Then $\psi$ can be approximated, in the pointwise-norm topology, by 
one-step inner maps, i.e., maps
of the form $a \mapsto x a x^*$, where $x \in \C(\B)$.   

Moreover, for the above approximating one-step inner maps, we can
choose $x$ so that $\| x \| \leq 2$.
\end{lem}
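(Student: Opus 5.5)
Our plan is to derive the statement from Gabe's theory of approximate domination in \cite{GabeOinfty} (the same source used in the proof of Proposition \ref{prop:Examples1}), applied to the inclusion $\iota : \A \hookrightarrow \C(\B)$, which by hypothesis is strongly $O_{\infty}$-stable. We work in three steps.

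\emph{Step 1: $\psi$ is approximately dominated by $\iota$.} The hypothesis $\psi(a) \in Ideal_{\C(\B)}(a)$ for all $a \in \A$ is exactly the ideal-containment condition in Gabe's notion of domination. We would show that $\psi$ lies in the closed operator convex cone generated by the maps $a \mapsto b^* a b$ with $b \in \C(\B)$. To do this, we use nuclearity of $\A$: since $\psi$ is nuclear (its domain is nuclear), it is approximately a finite sum of maps of the form $a \mapsto \sum_{i,j} c_i^* \rho(\cdot) c_j$ built from finite-dimensional factorizations. We then use the ideal condition, together with Kirchberg's characterization of cp maps respecting ideals (see \cite{GabeOinfty} Section 3 / Kasparov--Stinespring with operator convex cones, \cite{GabeRuiz} Corollary 2.26), to conclude that $\psi$ is approximated pointwise by finite sums $a \mapsto \sum_{k=1}^N x_k^* a x_k$ with $x_k \in \C(\B)$. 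We expect this step to be the main obstacle: the ideal-preserving hypothesis must be upgraded to a genuine approximation by finite sums of inner maps. The first thing we would try is to invoke the result of \cite{GabeOinfty} (the Kirchberg--Gabe theorem) stating that for separable nuclear $\A$, a cp map $\psi$ with $\psi(a) \in \overline{\C(\B) a \C(\B)}$ for all $a$ is a pointwise limit of such finite sums.

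\emph{Step 2: collapsing finite sums using $O_{\infty}$-stability.} Fix a finite set $F \subset \A$ and $\epsilon > 0$, and suppose $\psi \approx \sum_{k=1}^N x_k^* (\cdot) x_k$ on $F$. Strong $O_{\infty}$-stability gives paths $s^{(1)}_t, s^{(2)}_t$; by iterating (e.g.\ $s^{(1)}, s^{(2)}s^{(1)}, (s^{(2)})^2 s^{(1)}, \dots$) we obtain elements $t_1, \ldots, t_N$ that asymptotically commute with $F$ and satisfy $t_j^* t_k a \approx \delta_{jk} a$ on $F$. Set $y = \sum_k t_k x_k$. Then
$$y^* a y = \sum_{j,k} x_j^* t_j^* a t_k x_k \approx \sum_{j,k} x_j^* t_j^* t_k a x_k \approx \sum_k x_k^* a x_k.$$
This is a single one-step inner map, and taking $x = y^*$ gives the required form $a \mapsto x a x^*$. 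We would carry this out with the cp map $a\mapsto \sum x_k^*ax_k$ viewed through \cite{GabeOinfty} Proposition 4.10-type arguments, or directly as above.

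\emph{Step 3: the norm bound $\|x\| \le 2$.} Since $\psi$ is contractive, the approximant $\sum_k x_k^* (\cdot) x_k$ can be arranged to have norm close to at most $1$ on an approximate unit of $\A$. We would cut down $y$ by an element $e$ of a quasicentral approximate unit of $\A$ (within $C^*(\A, y)$), replacing $y$ by $e^{1/2}$-compressed versions; then $\|y e\|^2 = \|e y^* y e\|$ is controlled by $\|\sum_k x_k^* e^2 x_k\| \lesssim 1$ plus error terms, the latter coming from the asymptotic relations among the $t_k$. Allowing slack in these estimates yields $\|x\| \le 2$.
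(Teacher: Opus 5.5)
\textbf{Steps 1 and 2.} Your route is the one the paper has in mind. The paper just cites \cite{GabeO2} Theorem 3.22, and says its essence is two things. First, the Kirchberg--R{\o}rdam Hahn--Banach separation theorem (\cite{KirchbergRordamHomotopy} Corollary 4.3): since $\psi$ is nuclear and $\psi(a)\in Ideal_{\C(\B)}(a)$, it gives point-norm approximation of $\psi$ by finite sums $a\mapsto\sum_k x_k^*ax_k$. Second, collapsing such sums to a single inner map using the $O_\infty$-isometries.

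Your Step 2 computation with $t_k=(s^{(2)}_t)^{k-1}s^{(1)}_t$ is correct. The boundedness of the paths is what lets you push $a$ through the products.

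In Step 1, the input you need is exactly that separation theorem, i.e.\ the statement you say you would ``first try'' to invoke. The Kasparov--Stinespring dilation of \cite{GabeRuiz} Corollary 2.26 presupposes membership in the operator convex cone, so it cannot supply this step.

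\textbf{Step 3 does not work as written.} Replacing $y$ by $ye$ compresses on the wrong side. It gives $(ye)^*a(ye)=e\,y^*ay\,e\approx e\psi(a)e$, which need not be close to $\psi(a)$, because $e\in\A$ is not a unit for $\psi(a)\in\C(\B)$. For example, take $\A=\mathbb{C}p$ in the Calkin algebra with $p$ a nontrivial projection and $\psi(p)=1-p$; then $e=p$ and $e\psi(p)e=0$.

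Also, $\|ey^*ye\|$ is not controlled by $\|\sum_k x_k^*e^2x_k\|$; that sum controls $\|y^*e^2y\|=\|ey\|^2$.

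Finally, choosing $e$ quasicentral relative to $y$ is circular. Bounding $y^*e^2y$ through $\psi(e^2)$ requires $e^2$ to be among the elements on which $y$ was built, so $e$ must come first.

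\textbf{Fix.} No quasicentrality is needed.
\begin{enumerate}
\item Given $F$ and $\epsilon$, first pick $e$ from an approximate unit of $\A$ with $eae\approx_\epsilon a$ for $a\in F$.
\item Run Steps 1--2 on $F'=F\cup eFe\cup\{e^2\}$ to get $y$ with $y^*by\approx_\epsilon\psi(b)$ for all $b\in F'$.
\item Set $x=y^*e$.
\end{enumerate}
Then $xax^*=y^*(eae)y\approx_\epsilon\psi(eae)\approx_\epsilon\psi(a)$. Moreover $\|x\|^2=\|y^*e^2y\|\le\|\psi(e^2)\|+\epsilon\le1+\epsilon$, so $\|x\|\le\sqrt{1+\epsilon}\le 2$.
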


\begin{proof}
 This follows immediately from \cite{GabeO2} Theorem 3.22.  

The essence of the argument is the Hahn--Banach separation type theorem in 
\cite{KirchbergRordamHomotopy} Corollary 4.3, together with computations using
the strong $O_{\infty}$-stability condition. (Actually, one only needs (not necessarily strong)
$O_{\infty}$-stability of the map, which gives a ``uniform proper infiniteness" on finite sets of 
positive elements.   See \cite{GabeOinfty} Remark 4.2.  Also compare with \cite{KirchbergRordamPI} Prop. 8.4 (iii).)

\end{proof}

\begin{lem} \label{lem:Mar720261AM}
Let $\B$ be a nonunital C*-algebra, and let $\A \subset C(\B)$ be 
a separable nuclear C*-subalgebra 
such that the inclusion map $\A \hookrightarrow \C(\B)$ is strongly 
$O_{\infty}$-stable. 

Suppose that $\psi : \A \rightarrow \C(\B)$ is a completely positive contractive
linear map for which $\psi(a) \in Ideal_{\C(\B)}(a)$ for all $a \in \A$.

Then we can find a (norm-) bounded
sequence $\{ x_n \}$ in $\C(\B)$ 
such that for all $a \in \A$,
$$x_n a x_{n+1}^* \rightarrow 0 \makebox{ and } x_n a x_n^* 
\rightarrow \psi(a), \makebox{ as } 
n \rightarrow \infty.$$                                         
\end{lem}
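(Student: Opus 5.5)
We first use Lemma \ref{lem:Approx1StepInner} to get one-step inner approximants with uniformly bounded norm. Then we use the strong $O_{\infty}$-stability isometries to make consecutive approximants ``asymptotically orthogonal'' while keeping them inner.

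Fix a dense sequence $\{a_k\}$ in the unit ball of $\A$ and finite sets $F_n = \{a_1, \dots, a_n\}$. By Lemma \ref{lem:Approx1StepInner}, for each $n$ choose $y_n \in \C(\B)$ with $\|y_n\| \leq 2$ and $\|y_n a y_n^* - \psi(a)\| < 1/n$ for all $a \in F_n$.

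Next take the paths $s^{(1)}_t, s^{(2)}_t$ from Definition \ref{df:PIExt}(2), applied to the inclusion $\A \hookrightarrow \C(\B)$. Set $w_n := s^{(1)}_{t_n}$ for $n$ odd and $w_n := s^{(2)}_{t_n}$ for $n$ even, where $t_n$ is chosen below. Define
$$x_n := y_n w_n^*.$$
These are bounded by $2\sup_t\|s^{(j)}_t\|$. Then
$$x_n a x_n^* = y_n w_n^* a w_n y_n^*.$$
If $w_n$ approximately commutes with $a$ and $w_n^* w_n a \approx a$, this is approximately $y_n a y_n^*$, which is close to $\psi(a)$.

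For the cross term,
$$x_n a x_{n+1}^* = y_n w_n^* a w_{n+1} y_{n+1}^*.$$
The indices alternate parity, so $w_n$ and $w_{n+1}$ come from different paths. We want $(s^{(j)})^* a s^{(k)} \approx (s^{(j)})^* s^{(k)} a \approx 0$ for $j \neq k$. The difficulty is that $w_n$ and $w_{n+1}$ are evaluated at different times $t_n \neq t_{n+1}$, while the orthogonality relation only holds at equal times. This is the main obstacle, and continuity of the path is what resolves it.

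To handle it, I would first pick a rapidly increasing sequence $T_n$ such that for $t \geq T_n$ all the relations of Definition \ref{df:PIExt}(2) hold within $1/n$ on $F_n$. Then I would choose $t_{n+1}$ very close to $t_n$, using uniform continuity of both paths on compact intervals, so that $s^{(k)}_{t_{n+1}} \approx s^{(k)}_{t_n}$ within $1/n$. That makes the cross term at most about $\|y_n\|\|y_{n+1}\|\cdot(\text{small})$. However, $t_n$ must also tend to infinity.

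The way to reconcile these two requirements is to stop indexing by $n$ with fixed $y_n$. Instead, fix the sequence $y_m$ and choose a continuous increasing parameter, repeating each $y_m$ for many consecutive indices $n$. Let $t_n$ increase slowly, with increments $\to 0$, and tend to $\infty$. Take each $w_n$ at the current time, alternating the path index.

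The consecutive pairs then come in two kinds:
\begin{itemize}
\item Both factors use the same $y_m$. The cross term is $y_m (s^{(j)}_{t})^* a\, s^{(k)}_{t'} y_m^*$ with $j \neq k$ and $|t - t'|$ small, which tends to $0$.
\item The pair straddles a switch from $y_m$ to $y_{m+1}$. The same estimate applies, since only the boundedness of the $y$'s is used.
\end{itemize}

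Meanwhile $x_n a x_n^* \to \psi(a)$ for each $a_k$, and by density and boundedness for all $a \in \A$. The same density argument extends the cross-term limit from the $a_k$ to all of $\A$.
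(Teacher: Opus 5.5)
Your argument is essentially the paper's: the same $x_n = y_n (s^{(j)}_{t_n})^*$ with $j$ alternating in parity, $\|y_n\| \leq 2$ from Lemma \ref{lem:Approx1StepInner}, and $t_n \to \infty$ chosen so that $\|s^{(j)}_{t_{n+1}} - s^{(j)}_{t_n}\| \to 0$. Note that it is this path-value condition, obtained from uniform continuity on each $[N, N+1]$ as in your earlier paragraph, that is needed; $t_{n+1} - t_n \to 0$ alone does not suffice, since the paths need not be uniformly continuous on $[0,\infty)$. Repeating the $y_m$ is harmless but unnecessary: the cross terms only use boundedness of the $y$'s, so the paper takes any $y_n$ with $y_n a y_n^* \to \psi(a)$ and chooses $\{t_n\}$ independently, and the tension you describe with a rapidly increasing $T_n$ never arises.
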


\begin{proof}
Since the inclusion map $\A \hookrightarrow \C(\B)$ is strongly $O_{\infty}$-stable,
let $\{ s^{(j)}_t \}_{t \in [0, \infty)}$ ($j=1,2$) be
\emph{bounded} norm continuous paths in $\C(\B)$ which realize this (as in Definition
\ref{df:PIExt} (2), with $\phi$ replaced with the inclusion map).

Choose a strictly
increasing sequence $\{ t_n \}_{n=1}^{\infty}$ in $[0, \infty)$ such that 
\begin{equation} \label{equ:Mar620268AM}
t_n \rightarrow \infty \makebox{ and for $j=1,2$, } 
\| s^{(j)}_{t_n} - s^{(j)}_{t_{n+1}} \| \rightarrow 0.
\end{equation}
Now by Lemma \ref{lem:Approx1StepInner}, 
let $\{ y_n \}$ be a sequence in $\C(\B)$, with $\| y_n \| \leq 2$ for all $n$,
such that for all $a \in \A$, 
\begin{equation} \label{equ:Mar620268PM}
y_n a y_n^* \rightarrow \psi(a).
\end{equation} 
For all $n \geq 1$, let $x_n \in \C(\B)$ be given by 
$$
x_n =_{df} \begin{cases}
y_n (s^{(1)}_{t_n})^* & \makebox{ if $n$ is odd}\\
y_n (s^{(2)}_{t_n})^* & \makebox{ if $n$ is even}\\
\end{cases}
$$ 

Let $\epsilon > 0$ and $a \in \A$ be arbitrary.   
For all \emph{sufficiently large $n$,} with $n$ being odd, we have that  
\begin{eqnarray*}
x_n a x_{n+1}^* & = & y_n (s^{(1)}_{t_n})^* a s^{(2)}_{t_{n+1}} y_{n+1}^*\\
& \approx_{\frac{\epsilon}{5}} & 
y_n (s^{(1)}_{t_n})^*  s^{(2)}_{t_{n+1}} a  y_{n+1}^* 
\makebox{ (by Definition \ref{df:PIExt} (2))}\\  
& \approx_{\frac{\epsilon}{5}} &
y_n (s^{(1)}_{t_n})^*  s^{(2)}_{t_{n}} a  y_{n+1}^* 
\makebox{ (by (\ref{equ:Mar620268AM}))}\\
& \approx_{\frac{\epsilon}{5}} & 0 \makebox{ (by Definition \ref{df:PIExt} (2)).} 
\end{eqnarray*}
Hence, for all sufficiently large $n$, with $n$ being odd,  
$x_n a x_{n+1}^* \approx_{\epsilon} 0$.
By a similar argument, for all sufficiently large $n$, with $n$ being even, 
$x_n a x_{n+1}^* \approx_{\epsilon} 0$.
Hence, since $\epsilon$ and $a$ are arbitrary, for all $a \in \A$,
$x_n a x_{n+1}^* \rightarrow 0$.

Again, let $\epsilon > 0$ and $a \in \A$ be arbitrary. 
For all sufficiently large $n$, with $n$ being odd, we have that
\begin{eqnarray*}
x_n a x_{n}^* & = & y_n (s^{(1)}_{t_n})^* a s^{(1)}_{t_{n}} y_n^*\\
& \approx_{\frac{\epsilon}{5}} &
y_n (s^{(1)}_{t_n})^*  s^{(1)}_{t_{n}} a  y_n^*
\makebox{ (by Definition \ref{df:PIExt} (2))}\\
& \approx_{\frac{\epsilon}{5}} & 
y_n a y_n^* \makebox{ (by Definition \ref{df:PIExt} (2))}\\
&  \approx_{\frac{\epsilon}{5}} & \psi(a) 
\makebox{  (by (\ref{equ:Mar620268PM})).}
\end{eqnarray*}
Hence, for all sufficiently large $n$, with $n$ being odd,
$x_n a x_{n}^* \approx_{\epsilon} \psi(a)$.
By a similar argument, for all sufficiently large $n$, with $n$ being even,
$x_n a x_{n}^* \approx_{\epsilon} \psi(a)$.
Since $\epsilon$ and $a$ are arbitrary, for all $a \in \A$,
$x_n a x_n^* \rightarrow \psi(a)$.  
\end{proof}

The following generalizes \cite{VoiculescuWvN} Lemma 1.2 (see also \cite{ArvesonExt} Section 2, Theorem 4
and its Corollary).   

\begin{thm} \label{thm:NgWangWvN1} 
Let $\B$ be a nonunital $\sigma$-unital C*-algebra, and let
$\A \subseteq \C(\B)$ be a separable nuclear C*-subalgebra, such that
the inclusion map $\A \hookrightarrow \C(\B)$ is strongly $O_{\infty}$-stable.

Suppose that $\psi : \A \rightarrow \C(\B)$ is a completely positive contractive
linear map for which $\psi(a) \in Ideal_{\C(\B)}(a)$ for all $a \in A$.

Then there exists an element $x \in \C(\B)$ such that 
$\psi(a) = x a x^*$ for all $a \in \A$.   
\end{thm}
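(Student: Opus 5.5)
The plan is to glue together the sequence produced by Lemma \ref{lem:Mar720261AM}, using a quasicentral approximate unit of $\B$ inside $\Mul(\B)$. This is the classical Voiculescu/Kasparov ``infinite sum'' trick, and it is where $\sigma$-unitality of $\B$ is used.

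\emph{Setting up the lifts.} Apply Lemma \ref{lem:Mar720261AM} to get a bounded sequence $\{x_n\}$ in $\C(\B)$ with
$$x_n a x_{n+1}^* \to 0 \quad \mbox{and} \quad x_n a x_n^* \to \psi(a)$$
for all $a \in \A$. Passing to a subsequence keeps both properties: the diagonal property is clear, and for the off-diagonal one I would rerun the construction of Lemma \ref{lem:Mar720261AM} along the subsequence, alternating $s^{(1)}, s^{(2)}$. Then fix the following data:
\begin{itemize}
\item a dense sequence $\{a_k\}$ in $\A$;
\item lifts $A_k \in \Mul(\B)$ of $a_k$, lifts $X_n \in \Mul(\B)$ of $x_n$ with $\sup_n \|X_n\| < \infty$, and lifts $\Psi_k$ of $\psi(a_k)$.
\end{itemize}
Since an element of the corona of small norm has a lift of small norm, after passing to a subsequence and adjusting by elements of $\B$ we have
$$X_n A_k X_{n+1}^* = C_{n,k} + b_{n,k}, \qquad X_n A_k X_n^* - \Psi_k = D_{n,k} + b'_{n,k},$$
where $b_{n,k}, b'_{n,k} \in \B$ and $\|C_{n,k}\|, \|D_{n,k}\| < 2^{-n}$ for $k \le n$.

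\emph{Building the multiplier.} Using $\sigma$-unitality, choose inductively an approximate unit $\{e_n\}$ of $\B$ (with $e_0 = 0$) that is quasicentral for the separable set $\{A_k, X_n, X_n^*, \Psi_k\}$. At stage $n$, choose $e_n$ so that:
\begin{itemize}
\item the commutators of $d_n = (e_n - e_{n-1})^{1/2}$ with $A_k, X_m, \Psi_k$ for $k, m \le n+1$ have norm $< 2^{-n}$;
\item $\|(1 - e_{n-1}) b\|$ and $\|b (1 - e_{n-1})\|$ are $< 2^{-n}$ for the finitely many $b_{m,k}, b'_{m,k}$ with $m, k \le n+1$. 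This is arranged by choosing the earlier $e$'s far enough out; more precisely, $d_n$ must almost kill $b_{n,k}, b'_{n,k}$, so one picks those lifts first and then pushes the later $e_n$ out.
\end{itemize}
Set $X = \sum_n X_n d_n$, which converges strictly because $\sum d_n^2 = 1$ and the $X_n$ are uniformly bounded. Let $x = \pi(X)$.

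\emph{Computing $x a_k x^*$.} Compute
$$X A_k X^* = \sum_{m,n} X_n d_n A_k d_m X_m^*.$$
Move each $d$ past $A_k$ and the $X$'s at a summable cost, which lies in $\B$ since each error term lies in $\B$ and the errors are norm-summable. Because $d_n d_m = 0$ for $|n-m| \ge 2$, only the diagonal and adjacent terms survive:
\begin{itemize}
\item The diagonal terms give $\sum_n d_n X_n A_k X_n^* d_n \equiv \sum_n d_n \Psi_k d_n \equiv \Psi_k$ modulo $\B$, using the $D_{n,k}$ and $b'_{n,k}$ estimates and quasicentrality.
\item The adjacent terms give $\sum_n d_n X_n A_k X_{n+1}^* d_{n+1}$ plus its adjoint-type partner. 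Each summand is $d_n(C_{n,k} + b_{n,k})d_{n+1}$, which has norm $\lesssim 2^{-n}$. The sum therefore converges in norm with terms in $\B$ up to those small errors, and so lies in $\B$.
\end{itemize}
Hence $x a_k x^* = \psi(a_k)$ for all $k$. By continuity and density, $x a x^* = \psi(a)$ for all $a \in \A$.

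\emph{Main obstacle.} The hardest part is the $\epsilon/2^n$ bookkeeping: the lifts and the approximate unit must be chosen in the right interleaved order so that every error series is norm-summable with summands in $\B$, and the cross terms between neighbouring blocks must be controlled. The off-diagonal condition $x_n a x_{n+1}^* \to 0$ from Lemma \ref{lem:Mar720261AM} exists precisely to handle those cross terms.
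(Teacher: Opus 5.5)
Your route is the paper's: lift to $\Mul(\B)$, take a quasicentral approximate unit with $e_{n+1}e_n=e_n$, glue the lifted $x_n$ along the blocks $e_n-e_{n-1}$, recover $\Psi_k$ from the diagonal terms, and kill the adjacent cross terms with $x_nax_{n+1}^*\to 0$. Your $\sum_nX_nd_n$ and pointwise lifts $\Psi_k$, in place of the paper's $\sum_n(e_n-e_{n-1})^{1/4}\widetilde{x}_n(e_n-e_{n-1})^{1/4}$ and Choi--Effros lift $\widetilde{\psi}$, are cosmetic changes. One small correction: strict convergence of $\sum_nX_nd_n$ comes from your condition $\|[d_n,X_n]\|<2^{-n}$. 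It does not follow from $\sum_nd_n^2=1$ and $\sup_n\|X_n\|<\infty$ alone, which do not suffice.

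The step that does not hold as written is the passage to a subsequence to get $\|C_{n,k}\|,\|D_{n,k}\|<2^{-n}$; the paper's (\ref{equ:Mar720263AM}) makes the same move.
\begin{itemize}
\item Thinning $\{x_n\}$ destroys $x_nax_{n+1}^*\to 0$ in general. In Lemma \ref{lem:Mar720261AM}, $x_n$ and $x_{n+2}$ use the same $s^{(j)}$ at nearby times, so $x_nax_{n+2}^*$ is roughly $y_nay_{n+2}^*$.
\item Rerunning that lemma does not help. It controls $x_nax_{n+1}^*$ and $x_nax_n^*-\psi(a)$ only through the relations at $t_n$ and through $\|s^{(j)}_{t_n}-s^{(j)}_{t_{n+1}}\|\to 0$. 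The latter is lost if the $t_n$ are thinned, and neither comes with a rate if they are not.
\end{itemize}
So the summability on which your cross-term and diagonal estimates rest is unjustified.

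It is, however, unnecessary. Keep the whole sequence and choose $b_{n,k},b'_{n,k}$ so that $\|C_{n,k}\|,\|D_{n,k}\|\to 0$ for each fixed $k$. These $b$'s are still fixed before the $e_n$, so $\|d_nb_{n,k}\|,\|d_nb'_{n,k}\|<2^{-n}$ can still be arranged. Then use $d_nd_m=0$ for $|n-m|\geq 2$. For any finite set $S$ of integers of one parity, this gives
$\|\sum_{n\in S}d_nc_nd_{n+1}\|\leq\sup_{n\in S}\|c_n\|$ and $\|\sum_{n\in S}d_nc_nd_n\|\leq\sup_{n\in S}\|c_n\|$.
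Hence $\sum_nd_nC_{n,k}d_{n+1}$ and $\sum_nd_nD_{n,k}d_n$ are norm-Cauchy with summands in $\B$, so they lie in $\B$, and the rest of your computation goes through.
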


\begin{proof}
The proof is a (easier) variation on the argument in \cite{KucerovskyNgCPMaps}
Theorem 3.4.  We provide the proof for the convenience of the reader.  

Since $\A$ is separable, let $\{ a_n \}_{n=1}^{\infty}$ be a dense 
sequence 
in the closed unit sphere of $\A_+$ (i.e., set of norm one elements of $\A_+$).
By Lemma \ref{lem:Mar720261AM}, let $\{ x_n \}$ be a (norm-) bounded sequence
in $\C(\B)$ such that for all $a \in \A$,
\begin{equation} \label{equ:Mar720262AM}
 x_n a x_{n+1}^* \rightarrow 0 \makebox{ and } 
x_n a x_n^* \rightarrow \psi(a). \end{equation} 
Let $M > 0$ be such that $\| x_n \| < M$ for all $n$. 

Since $\A$ is nuclear, by 
\cite{ChoiEffros}, let $\widetilde{\psi} : \A \rightarrow \Mul(\B)$ be 
a completely positive contractive linear map such that 
$\pi \circ \widetilde{\psi} = \psi$, where $\pi : \Mul(\B) \rightarrow 
\C(\B)$ is the quotient map.  

Let $\{ \widetilde{x}_n \}$ and $\{ \widetilde{a}_n \}$ be sequences in  
$\Mul(\B)$ such that  
$$\| \widetilde{x}_n \| \leq M, \makebox{ } \widetilde{a}_n \in \Mul(\B)_+,
\makebox{ } \| \widetilde{a}_n \| = 1, \makebox{ }
\pi(\widetilde{x}_n) = x_n \makebox{ and } \pi(\widetilde{a}_n) = a_n,  
\makebox{ for all }n.$$ 
Hence, by (\ref{equ:Mar720262AM}), 
passing to a subsequence of $\{ x_n \}$ if necessary, we may
assume that for all $1 \leq k \leq n$, we can find
$b_{n,k}, b'_{n,k} \in \B$ such that 
\begin{equation} \label{equ:Mar720263AM}
\widetilde{\psi}(a_k) \approx_{\frac{1}{2^{n}}}  \widetilde{x}_n 
\widetilde{a}_k \widetilde{x}_n^* + b_{n,k} \makebox{ and } 
\widetilde{x}_n   
\widetilde{a}_k \widetilde{x}_{n+1}^* \approx_{\frac{1}{2^{n}}}  b'_{n,k}.
\end{equation}

Since $\B$ is $\sigma$-unital, let $\{ e_n \}$ be a sequential approximate 
unit for $\B$, with $e_{n+1} e_n = e_n$ for all $n$, such that
$\{ e_n \}$ quasicentralizes the set 
$\{ \widetilde{x}_n, \widetilde{a}_n, \widetilde{\psi}_n(a_l) : n, l \geq 1 \}$.
Passing to a subsequence of $\{ e_n \}$ if necessary, we may assume that
for all $1 \leq l \leq  k \leq n$, the following statements are true:
\begin{eqnarray}
\label{Friesen1} \widetilde{\psi}(a_k) (e_{n} - e_{n-1})^{1/4} 
& \approx_{\frac{1}{2^{n}}} &
(e_{n} - e_{n-1})^{1/4} \widetilde{\psi}(a_k).\\  
\label{Friesen2} \widetilde{x}_k (e_{n} - e_{n-1})^{1/4}
& \approx_{\frac{1}{2^{n}}} &
(e_{n} - e_{n-1})^{1/4}\widetilde{x}_k.\\
\label{Friesen3} \widetilde{a}_k (e_{n} - e_{n-1})^{1/4} 
& \approx_{\frac{1}{2^{n}}} &
(e_{n} - e_{n-1})^{1/4} \widetilde{a}_k\\
\label{Friesen4} b_{k,l} (e_{n} - e_{n-1})^{1/4} 
& \approx_{\frac{1}{2^{n}}} &
0 \makebox{  } \approx_{\frac{1}{2^{n}}} (e_{n} - e_{n-1})^{1/4} b_{k,l}\\ 
\label{Friesen5} 
b'_{k,l} (e_{n} - e_{n-1})^{1/4} & \approx_{\frac{1}{2^{n}}} &
0 \makebox{  } \approx_{\frac{1}{2^{n}}} (e_{n} - e_{n-1})^{1/4} b'_{k,l}
\end{eqnarray} 

Before going on, we fix a terminology:  For any $X, Y 
\in \Mul(\B)$, 
we write $X = Y \makebox{ (mod $\B$)}$ to mean that $X - Y \in \B$.

Next, for all $n \geq 1$, let $y_n =_{df} (e_n - e_{n-1})^{1/4} 
\widetilde{x}_n 
(e_n - e_{n-1})^{1/4} \in \B$, and let $y \in \Mul(\B)$ be given by
$y =_{df} \sum_{n=1}^{\infty} y_n$, where the series converges in 
the strict topology.   
Then for all $k \geq 1$,  
\begin{eqnarray*}
y \widetilde{a}_k y^* & = & \sum_n y \widetilde{a}_k y_n^*\\  
& = & \sum_n y (e_n - e_{n-1})^{1/4} \widetilde{a}_k 
\widetilde{x}_n^* (e_n - e_{n-1})^{1/4}
\makebox{ (mod $\B$)}\\
& &   \makebox{(by definition of $y_n$ and 
(\ref{Friesen3}))}\\  
& = & \sum_n (y_{n-1} + y_n + y_{n+1}) (e_n - e_{n-1})^{1/4} 
\widetilde{a}_k \widetilde{x}_n^* (e_n - e_{n-1})^{1/4}\\
& & \makebox{(by definitions of $y, y_m$ and since $e_m e_{m-1}
= e_{m-1}$ for all $m$)}\\  
& = & \sum_n (y_{n-1} + y_n + y_{n+1}) \widetilde{a}_k y_n^* 
\makebox{ (mod $\B$)}\\
& & \makebox{(by (\ref{Friesen3}) and definition of $y_m$)}\\ 
\end{eqnarray*}

Now for all $k$, by the definition of $y_m$, (\ref{Friesen2}), and
(\ref{equ:Mar720263AM}) and (\ref{Friesen5}),  
$$\sum_n y_{n-1} \widetilde{a}_k y_n^* 
= \sum_n (e_{n-1} - e_{n-2})^{1/2} \widetilde{x}_{n-1} \widetilde{a}_k 
\widetilde{x}_n^* (e_n - e_{n-1})^{1/2} \makebox{ (mod $\B$) }  
= 0 \makebox{ (mod $\B$)}$$ 
So $\sum_n y_{n-1} \widetilde{a}_k y_n^* \in \B$. By a similar argument,
for all $k$, $\sum_n y_{n} \widetilde{a}_k y^*_{n-1} \in \B$.  
Plugging this into the computation of $y \widetilde{a}_k y^*$ 
above
(for all $k \geq 1$),
we have that for all $k \geq 1$, 
\begin{eqnarray*}
y \widetilde{a}_k y^* & = & \sum_n y_n \widetilde{a}_k y_n^* 
\makebox{  (mod $\B$)}\\
& = & \sum_n (e_n - e_{n-1})^{1/2} \widetilde{x}_n \widetilde{a}_k \widetilde{x}_n^* (e_n - e_{n-1})^{1/2}
\makebox{ (mod $\B$)}\\
& & \makebox{(by definition of $y_m$ and (\ref{Friesen2}))}\\
& = & \sum_n (e_n - e_{n-1})^{1/2} \widetilde{\psi}(a_k) (e_n - e_{n-1})^{1/2}
\makebox{ (mod $\B$)}\\
& & \makebox{(by (\ref{equ:Mar720263AM}) and (\ref{Friesen4}))}\\
& = & \widetilde{\psi}(a_k) \sum_n (e_n - e_{n-1})   \makebox{ (mod $\B$)}
\makebox{    (by (\ref{Friesen1}))}\\
& = & \widetilde{\psi}(a_k)
 \makebox{(mod $\B$)    (since $\{ e_n \}$ is an approximate unit for $\B$).}\\  
\end{eqnarray*}

Hence, if we let $x =_{df} \pi(y) \in \C(\B)$, then for all $k \geq 1$, 
$x a_k x^* = \psi(a_k)$.  
Since $\{ a_k \}$ is dense in the closed unit sphere of $\A_+$, we have that
$x a x^* = \psi(a)$ for all $a \in \A$. 
\end{proof}

\section{Classifying extensions}

We again introduce some notation.

\begin{para} \label{para:UEAndBDFSum}   
Let $\A$ and $\B$ be C*-algebras with $\B$ nonunital, and let
$\phi, \psi : \A \rightarrow \C(\B)$ be extensions (see \ref{FirstNotation}).
Recall that $\phi$ and $\psi$ are said to be \emph{weakly unitarily equivalent}
if there exists a unitary $u \in \C(\B)$ such that 
$u \phi(\cdot) u^* = \psi(\cdot)$.  
If, in addition, we can find a unitary $w \in \Mul(\B)$ for which $u = \pi(w)$,
then we say that $\phi$ and $\psi$ are \emph{unitarily equivalent}, 
and write $\phi \sim \psi$. We also let $[\phi]$ denote the 
unitary equivalence
class of $\phi$.

Suppose, in addition, that $\C(\B)$ is properly infinite.  The 
\emph{(generalized) Brown--Douglas--Fillmore sum} (or \emph{BDF sum})
of $\phi$ and $\psi$ is the extension $(\phi \oplus \psi)(\cdot) =_{df}
S \phi(\cdot) S^* + 
T \psi(\cdot) T^*$, where $S, T \in \C(\B)$ are isometries for which
$SS^* + TT^* \leq 1$.  
If $\phi$ and $\psi$ have large complement, then the BDF sum is well-defined
up to weak unitary equivalence.
If, in addition, $\C(\B)$ is also $K_1$-injective and 
$\phi$ and $\psi$ have large complement, then the BDF sum is well-defined
up to unitary equivalence.  (See \cite{GabeLinNg} Proposition 3.7 or
\cite{ChandNgSutradhar} Lemma 2.2.)   
\end{para}

We now define the object that we want to compute in our classification of extensions.
Of course, it may be empty until we impose more
conditions.        

\begin{df} \label{df:ExtSigma}
Let $\A$ be a separable C*-algebra,
and $\B$ be a $\sigma$-unital, nonunital C*-algebra.
Suppose that
$\sigma : \A \rightarrow \C(\B)$ is an essential extension.

Then $\ee_{\sigma}(\A, \B)$ consists of all $[\phi]$ 
(see \ref{para:UEAndBDFSum}) such that
\begin{enumerate}
\item[(a)] $\phi : \A \rightarrow \C(\B)$ is essential and has large complement,
\item[(b)] $\phi$ is strongly $O_{\infty}$-stable, and
\item[(c)]  $Ideal_{\C(\B)}(\phi(a)) = Ideal_{\C(\B)}(\sigma(a))$ for all
                $a \in \A$.
\end{enumerate}
\end{df}
\begin{para}
If, in the Definition \ref{df:ExtSigma}, 
$\sigma$ is also strongly $O_{\infty}$-stable and
has large complement, then $\ee_{\sigma}(\A, \B)$ is certainly nonempty.
\end{para}

\begin{para} \label{para:ExtBDFSum}
 Suppose, in addition, that $\C(\B)$ is properly infinite and $K_1$-injective.
Then by \ref{para:UEAndBDFSum}, the BDF sum induces a well-defined 
addition on $\ee_{\sigma}(\A, \B)$ (which we also call the \emph{BDF sum}) given by $[\phi] + [\psi] 
=_{df} [\phi \oplus \psi]$, for all $[\phi], [\psi]
\in \ee_{\sigma}(\A, \B)$. 
Thus, $\ee_{\sigma}(\A, \B)$ becomes an 
abelian semigroup.         
\end{para}

\begin{para}
For our classification of extensions, we use Kirchberg's interesting
ideal related KK (\cite{KirchbergIdealKK}), which generalizes Kasparov's KK.   A good reference for ideal related KK theory and 
the relevant background information (e.g, actions of topological spaces on C*-algebras) can be found in 
\cite{GabeOinfty}.  A short quick reference can be found in \cite{NgWangExt1}. \emph{For the rest of this paper, we will
 freely use the notation and terminology of \cite{NgWangExt1} without introduction.} 
\end{para}

 \begin{para}   \label{para:Actions} We next fix some notation to be used in the main theorem.
        Let $\D$ and $\E$ be C*-algebras, and
        let $\phi : \D \rightarrow \E$ be a  *-homomorphism.
        Let $X$ be the topological space given by $X =_{df} Prim(\E)$, i.e.,
        $X$ is the primitive ideal space of $\E$, endowed with the Jacobson or
        hull--kernel topology
        (see \cite{DavidsonBook}
        section VII.3 or \cite{PedersenBook} subsection 4.1.2).

        Let $\phi : \D \rightarrow \E$ be a  *-homomorphism,
         and
        let $\Phi_{\D} : \oo(X) \rightarrow \I(\D)$ and $\Phi_{\E} : \oo(X) \rightarrow
        \I(\E)$ be the
actions of $X$, on $\D$ and $\E$ respectively (see \cite{NgWangExt1} 4.11, 
4.10;  
 \cite{GabeOinfty} 10.7), that are given as
        follows:

        \begin{equation} \label{equ:Oct3020241AM}
                \Phi_{\E} : G \mapsto \bigcap_{\J \in X - G} \J \makebox{ and }
                \Phi_{\D} : G \mapsto \phi^{-1}(\Phi_{\E}(G)).
        \end{equation}
\end{para}

Here is our (BDF-type) classification of extensions.  As far as we know, it contains
all previous (up to (strong) unitary equivalence) classifications of essential
extensions (with large complement, and of $\sigma$-unital C*-algebras 
 by separable nuclear C*-algebras).  

\begin{thm} \label{thm:MainThEnd}
        Let $\A$ be a separable nuclear C*-algebra, and let $\B$ be a
        nonunital, $\sigma$-unital C*-algebra 
such that $\C(\B)$ is $K_1$-injective and properly infinite. 

        Let $X =_{df} Prim(\C(\B))$ (i.e., $X$ is the primitive
ideal space of $\C(\B)$,  given
the Jacobson topology), and let $\sigma : \A \rightarrow
        \C(\B)$ be a strongly $O_{\infty}$-stable, essential
extension with large complement.  
Let
$\Phi_{\A} : O(X) \rightarrow Ideal(\A)$ and
        $\Phi_{\C(\B)} : O(X) \rightarrow Ideal(\C(\B))$ be the actions on
        $\A$ and $\C(\B)$ respectively, which are defined as in
        (\ref{equ:Oct3020241AM}) (with $\D$, $\E$, $\phi$ replaced with
        $\A$, $\C(\B)$, $\sigma$ respectively).

        Then we have the following:
\begin{enumerate}
\item $\ee_{\sigma}(\A, \B)$ is an abelian group.
\item The map
        $$\Gamma : \ee_{\sigma}(\A, \B) \rightarrow KK(X; \A, \C(\B)) :
        [\psi]_{Ext} \mapsto [\psi]_{KKX}$$
        is a group isomorphism.    
\end{enumerate}
        (See Definition \ref{df:ExtSigma}, \ref{para:ExtBDFSum} and
        \cite{NgWangExt1} 4.14; \cite{GabeOinfty} Ch. 12.)
\end{thm}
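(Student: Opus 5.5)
The plan is to reduce the statement to Gabe's ideal-related Kasparov-type theorem for $O_\infty$-stable (equivalently, $X$-full and nuclearly absorbing) maps. The two engines are a uniqueness theorem ("Weyl--von Neumann" type) and an existence theorem. The uniqueness input comes from Theorem \ref{thm:NgWangWvN1}; the existence input comes from \cite{GabeOinfty} Chapter 12, as packaged in \cite{NgWangExt1} 4.14.

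\textbf{Step 1: well-definedness and homomorphism property.} Fix $\psi$ with $[\psi] \in \ee_{\sigma}(\A,\B)$. Condition (c) of Definition \ref{df:ExtSigma} says exactly that $\psi$ is $X$-equivariant for the actions $\Phi_{\A}, \Phi_{\C(\B)}$ of (\ref{equ:Oct3020241AM}). Indeed, $\psi(\Phi_{\A}(G)) \subseteq \Phi_{\C(\B)}(G)$ because $\psi(a)\in Ideal(\sigma(a))$. Hence $\psi$ defines a class $[\psi]_{KKX} \in KK(X;\A,\C(\B))$. Unitary equivalence by $w\in \Mul(\B)$ passes to a unitary $\pi(w)$ in $\C(\B)$, and inner automorphisms act trivially on $KK(X;\cdot,\C(\B))$. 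So $\Gamma$ is well defined. The BDF sum is well defined up to unitary equivalence by \ref{para:UEAndBDFSum}, since $\C(\B)$ is $K_1$-injective and properly infinite. It corresponds to the Cuntz sum in ideal-related KK, so $\Gamma$ is additive. Note that $\phi\oplus\psi$ again satisfies (a)--(c), and strong $O_\infty$-stability is preserved under BDF sums (\cite{GabeOinfty} Prop.~4.10).

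\textbf{Step 2: injectivity, the main obstacle.} Suppose $[\phi]_{KKX}=[\psi]_{KKX}$. The key observation is that each of $\phi$, $\psi$ absorbs the other in the relevant sense.

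First apply Theorem \ref{thm:NgWangWvN1} to $\A_0=\phi(\A)$, which is separable nuclear, with the inclusion strongly $O_\infty$-stable. Take the map $\psi\circ\phi^{-1}$, which is ideal-compatible by (c). This gives $x\in\C(\B)$ with $x\phi(a)x^*=\psi(a)$. Thus $\phi$ approximately (indeed exactly) dominates $\psi$, and symmetrically.

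By Gabe's $O_\infty$-stable uniqueness theorem (\cite{GabeOinfty}, the ideal-related version in Ch.~12 for strongly $O_\infty$-stable, mutually dominating $X$-equivariant maps with equal $KK(X)$-class), $\phi$ and $\psi$ are asymptotically unitarily equivalent via unitaries in $\C(\B)^\sim$ in the connected component. The difficulty is upgrading this to an exact unitary equivalence implemented by a unitary of $\Mul(\B)$.

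I would first use large complement. Write $\phi\sim\phi\oplus 0$ with a properly infinite complement, so that $K_1$-injectivity lets us absorb the $K_1$-obstruction into the complement. This mimics \cite{GabeLinNg} Prop.~3.7: unitaries of the form $u\oplus u^*$ lift, and asymptotic equivalence becomes exact equivalence after a Kasparov-type quasicentral approximate-unit argument in the style of the proof of Theorem \ref{thm:NgWangWvN1}. Here the surjectivity of $\pi$ on $\Mul(\B)$ is used to lift unitaries in $U_0(\C(\B))$.

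\textbf{Step 3: surjectivity and the group structure.} Given $z\in KK(X;\A,\C(\B))$, I would use the existence half of Gabe's theorem. A class is realized by an $X$-equivariant $*$-homomorphism of the form $\rho\oplus$(something) after adding a strongly $O_\infty$-stable, $X$-full map. Take $\sigma$ itself (or $\sigma\oplus\sigma$) as the absorbing unit, because $\sigma$ is strongly $O_\infty$-stable with large complement and generates the right ideals. Then compress into a corner $p\C(\B)p$ with $p\sim 1$ and $1-p$ also $\sim 1$, which is possible by proper infiniteness. This produces a representative with large complement satisfying (a)--(c).

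Bijectivity plus additivity makes $\ee_\sigma(\A,\B)$ a group isomorphic to $KK(X;\A,\C(\B))$, giving (1). The identity element is $[\sigma\oplus\text{(its }KK\text{-zero representative)}]$, and inverses are pulled back from $-z$.
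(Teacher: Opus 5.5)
Your overall architecture matches the paper's: reduce to Gabe's ideal-related uniqueness and existence theorems, then upgrade the resulting equivalence using large complement and $K_1$-injectivity. However, your surjectivity step (Step 3) has a genuine gap.

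\textbf{The gap: surjectivity.} $\C(\B)$ is unital and not stable, so there is no a priori reason that a class $z\in KK(X;\A,\C(\B))$ is carried by a $*$-homomorphism into $\C(\B)$. The existence theorem the paper uses, \cite{GabeOinfty} Proposition 15.6, requires three things:
\begin{enumerate}
\item $\A$ is a lower semicontinuous $X$-C*-algebra;
\item there is an $X$-equivariant, $X$-full, strongly $O_\infty$-stable map (your $\sigma$ serves here);
\item $\C(\B)$ contains a $\sigma$-unital, full, \emph{stable} hereditary C*-subalgebra.
\end{enumerate}
You never produce the third. The paper gets it from proper infiniteness: take pairwise orthogonal projections $p_n\sim 1_{\C(\B)}$, set $d=\sum_n p_n/n$, and then $\D_1=\overline{d\C(\B)d}$ is $\sigma$-unital, full, and stable by the Hjelmborg--Rordam criterion.

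The mechanism you sketch in its place does not work as stated. Using $\sigma$ (or $\sigma\oplus\sigma$) as an absorbing summand shifts the $KK(X)$-class by $[\sigma]_{KKX}$. That class need not vanish: strong $O_\infty$-stability, unlike $O_2$-stability, imposes no $KK$-vanishing. Realizing $z-[\sigma]_{KKX}$ by some $\rho$ is just the original realization problem again.

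Once the hypotheses are verified, Proposition 15.6 directly yields an $X$-equivariant, $X$-full, strongly $O_\infty$-stable $\phi$ with $[\phi]_{KKX}=z$. Then $\phi\oplus 0$ gives large complement, and \cite{NgWangExt1} Lemma 4.13(5) gives (c), hence essentiality. Relatedly, the zero of $\ee_\sigma(\A,\B)$ is simply $\Gamma^{-1}(0)$; it is not built from $\sigma$.

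\textbf{Injectivity: right spirit, misstated input.} \cite{GabeOinfty} Theorem F needs only $X$-equivariance and $X$-fullness (both follow from (c) via \cite{NgWangExt1} Lemma 4.13(5)) plus strong $O_\infty$-stability. So your detour through Theorem \ref{thm:NgWangWvN1} is valid but unnecessary. More importantly, Theorem F yields asymptotic Murray--von Neumann equivalence, not asymptotic unitary equivalence by unitaries in the identity component. The paper upgrades in three separate steps:
\begin{enumerate}
\item large complement and proper infiniteness give asymptotic unitary equivalence (\cite{GabeO2} Proposition 3.10);
\item asymptotic unitary equivalence in the corona gives weak unitary equivalence (\cite{GabeLinNg} Lemma 4.3);
\item only then do large complement and $K_1$-injectivity correct the $K_1$-class of the implementing unitary so that it lifts to $\Mul(\B)$ (\cite{GabeLinNg} Proposition 3.7).
\end{enumerate}
Your sketch gestures at the last two steps but conflates them with the first.
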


\begin{proof}
This argument is a (much easier) variation on that of \cite{NgWangExt1} Theorem 4.15.
We provide the argument for the convenience of the reader.

    Note that by the definition of $\ee_{\sigma}(\A, \B)$ (see
        Definition \ref{df:ExtSigma}) and by \cite{NgWangExt1}
 Lemma 4.13
        item (5),
        if $[\psi]_{Ext} \in
        \ee_{\sigma}(\A, \B)$, then $\psi: \A \rightarrow \C(\B)$ is
        $X$-equivariant, and hence, $\psi$ induces an element
        $[\psi]_{KKX} \in KK(X; \A, \C(\B))$. (See the end of the last paragraph
        of 4.14 in \cite{NgWangExt1}.)
        Hence, that $\Gamma$ is a well-defined map follows from the
        above and from
        the basic properties of ideal related KK theory (see, for example,
        \cite{NgWangExt1} 4.14 and \cite{GabeOinfty} Section 12).

        We next prove the surjectivity of the map $\Gamma$.
        Our first goal is to  check that the hypotheses of the existence theorem 
        of \cite{GabeOinfty} Proposition 15.6 are satisfied.
        Firstly,  $\A$ is a separable nuclear C*-algebra. Also,  by
        \cite{NgWangExt1} Lemma 4.13 item (2),
         $\A$ is a lower semicontinuous $X$-C*-algebra in the sense of
        \cite{GabeOinfty} Definition 10.8.

        Next, since $\C(\B)$ is properly infinite, it contains a unital copy of
        $O_{\infty}$, and hence, we have a sequence $\{ p_n \}_{n=1}^{\infty}$
        of pairwise orthogonal projections in $\C(\B)$ such that for all $n$,
        $p_n \sim 1_{\C(\B)}$.  Let $d \in \C(\B)$ be given by
        $d =_{df} \sum_{n=1}^{\infty} \frac{p_n}{n}$.  Then
        $\D_1 =_{df} \overline{d \C(\B) d}$ is a $\sigma$-unital full hereditary
        C*-subalgebra of $\C(\B)$.  Moreover,  by the definition of $d$
and since $d$ is a strictly positive element of $\D_1$,
 it is not hard to check that
        $\D_1$ satisfies the
        characterization of stability in \cite{HjelmborgRordam} Theorem 3.3.
Hence, by
        \cite{HjelmborgRordam} Theorem 3.3, $\D_1$ is stable.  Hence, $\D_1$ is a $\sigma$-unital,
        full, stable, hereditary
        C*-subalgebra of $\C(\B)$.

        Next, by \cite{NgWangExt1} Lemma 4.13 item (4),
$\sigma : \A \rightarrow
        \C(\B)$ is $X$-equivariant and  $X$-full.
Moreover, by hypothesis, $\sigma$ is a  strongly
        $O_{\infty}$-stable *-homomorphism.

        From the above, the hypotheses of \cite{GabeOinfty} Proposition 15.6 are
        satisfied.  Now let $\alpha \in KK(X; \A, \C(\B))$ be arbitrary.
        By \cite{GabeOinfty} Proposition 15.6,
        there exists an $X$-equivariant,  $X$-full and strongly $O_{\infty}$-stable
 *-homomorphism
        $\phi : \A \rightarrow \C(\B)$ such that
        $[\phi]_{KKX} = \alpha$.
        Replacing $\phi$ with $\phi \oplus 0$
(where $\oplus$ is the BDF sum; see the last paragraph of
\ref{para:UEAndBDFSum})  if
        necessary, we may assume that $\phi$ has large complement.
        Moreover, by \cite{NgWangExt1} Lemma 4.13 item (5),
        $Ideal(\phi(a)) = Ideal(\sigma(a))$ for all $a \in \A$ (which implies
        that $\phi$ is injective, since $\sigma$ is injective -- i.e.,
        $\phi$ is an essential extension).
        Thus, $\phi$ induces an element
        $[\phi]_{Ext} \in \ee_{\sigma}(\A, \B)$ (see Definition
        \ref{df:ExtSigma}) and $\Gamma([\phi]_{Ext})  = \alpha$.
        Since $\alpha \in KK(X; \A, \B)$ is arbitrary, $\Gamma$ is surjective.

We now prove that $\Gamma$ is injective.
        Again, let us firstly the check that the hypotheses of the uniqueness
        theorem \cite{GabeOinfty} Theorem F are satisfied.
        Firstly, note that
        $\A$ is a separable nuclear C*-algebra.  Also, by \cite{NgWangExt1} Lemma
        4.13 item (2), $\A$ is a lower semicontinuous
        $X$-C*-algebra in the sense of \cite{GabeOinfty} 10.8.

        Now let $[\phi]_{Ext}, [\psi]_{Ext} \in \ee_{\sigma}(\A, \B)$
be such that
$\Gamma([\phi]_{Ext}) = \Gamma([\psi]_{Ext})$; i.e.,
 $[\phi]_{KKX} = [\psi]_{KKX}$ in $KK(X; \A, \C(\B))$.
So $\phi$ and $\psi$ both are strongly $O_{\infty}$-stable,  
essential extensions
with large complement and
        $Ideal(\phi(a)) = Ideal(\sigma(a)) = Ideal(\psi(a))$
for all $a \in \A$.
Hence, by \cite{NgWangExt1} Lemma 4.13 item (5), $\phi$ and $\psi$ are both $X$-equivariant and $X$-full.

Hence, all the hypotheses of \cite{GabeOinfty} Theorem F are fullfilled.
Hence, by \cite{GabeOinfty} Theorem F, $\phi$ and $\psi$ are asymptotically
Murray--von Neumann equivalent in the sense of \cite{GabeOinfty} (1.6).
Hence, since $\C(\B)$ is properly infinite and $\phi$ and $\psi$ both have large complement, 
and by \cite{GabeO2}   
Proposition 3.10, $\phi$ and $\psi$ are asymptotically unitarily equivalent.
Hence, by \cite{GabeLinNg} Lemma 4.3 (see also \cite{NgWangExt1} Lemma 3.7),
$\phi$ and $\psi$ are weakly unitarily equivalent. 
Hence, by \cite{GabeLinNg} Proposition 3.7 (see also
\cite{ChandNgSutradhar} Lemma 2.2 for another proof), since
$\C(\B)$ is $K_1$-injective,  it follows that
$\phi$ and $\psi$ are unitarily equivalent.  Hence, $[\phi]_{Ext} =
[\psi]_{Ext}$.  Since $[\phi]_{Ext}$ and $[\psi]_{Ext}$ 
are arbitrary,    $\Gamma$
is injective.   

Now it is clear that $\Gamma$ respects addition (e.g., see \ref{para:ExtBDFSum} and \cite{GabeOinfty} section 12.3).
Thus, $\Gamma$ is a morphism of abelian semigroups.
Hence, since $KK(X; \A, \B)$ is an abelian group, and since $\Gamma$ is bijective, $\ee_{\sigma}(\A, \B)$ is an abelian
group and $\Gamma$ is a group isomorphism.
 
\end{proof}
 
\begin{para}
Suppose that $\B$ is a nonunital $\sigma$-unital C*-algebra such that either i. $\B$ is stable, ii. $\B$ is simple with continuous scale,  iii.  
$\B$ is simple, pure, 
with quasicontinuous scale, or iv. $\B \cong \B \otimes O_{\infty}$. 
Then $\C(\B)$ is $K_1$-injective.  (E.g., for i., ii., and iii.,  
see \cite{ChandNgSutradhar} Theorem 7.9 items 7, 5 and 6;  the proofs do not require that $\B$ be separable.  For i., simplicity is also not 
required.  Sketch of the proof for iv.:  By the proof of Proposition \ref{prop:Examples1}, $\Mul(\B)$ is separably $O_{\infty}$-stable.  Hence,
$\C(\B)$ is separably $O_{\infty}$-stable.   Hence,
by \cite{RohdeThesis} Theorem 6.2.3,  $\C(\B)$ is $K_1$-injective.)
So the $K_1$-injectivity condition of Theorem \ref{thm:MainThEnd} does indeed generalize an implicit condition in
previous classifications.      Moreover, it is an open question, of Blanchard, Rohde and Rordam,
whether 
every unital properly infinite C*-algebra is $K_1$-injective (\cite{BlRohRor}).
\end{para}

\end{document}